\documentclass[12pt,3p]{amsart}
\usepackage{bbm}
\usepackage{amsmath,amssymb, amsthm}
\usepackage[colorlinks,urlcolor=red]{hyperref}
\usepackage{graphicx, enumerate}
\addtolength{\topmargin}{-10mm} \addtolength{\textheight}{30mm}
\addtolength{\oddsidemargin}{-15mm}
\addtolength{\evensidemargin}{-15mm} \addtolength{\textwidth}{30mm}
\theoremstyle{plain}
\newtheorem{theorem}{Theorem}[section]

\newtheorem{lemma}[theorem]{Lemma}
\newtheorem{proposition}[theorem]{Proposition}
\numberwithin{equation}{section}\theoremstyle{definition}
 \newtheorem{definition}[theorem]{Definition}

\theoremstyle{remark}

\allowdisplaybreaks

\numberwithin{equation}{section}


\begin{document}

\title[A fixed point theorem for monotone asymptotic nonexpansive Mappings ]
{A fixed point theorem for monotone asymptotic nonexpansive Mappings }
\author[M. R. Alfuraidan \& M. A. Khamsi]{Monther Rashed Alfuraidan,   Mohamed Amine Khamsi}

\maketitle

\vspace*{-0.5cm}


\begin{center}
{\footnotesize  Department of Mathematics \& Statistics,
King Fahd University of Petroleum and Minerals\\ Dhahran 31261, Saudi Arabia\\ monther@kfupm.edu.sa, \\ Department of Mathematical Sciences, University of Texas at El Paso\\ El Paso, TX 79968, USA \\ mohamed@utep.edu}
\end{center}

\hrulefill

{\footnotesize \noindent {\bf Abstract.}
Let $C$ be a nonempty, bounded, closed, and convex subset of a Banach space $X$ and $T: C \rightarrow C$ be a monotone asymptotic nonexpansive mapping.  In this paper, we investigate the existence of fixed points of $T$.  In particular, we establish an analogue to the original Goebel and Kirk's fixed point theorem for asymptotic nonexpansive mappings.

\vskip0.5cm

\noindent {\bf Keywords}: Asymptotic nonexpansive mapping, fixed point, monotone mapping, partially ordered, uniformly convex.

\noindent {\bf AMS Subject Classification}: Primary: 46B20, 45D05, Secondary: 47E10, 34A12}

\hrulefill

\section{Introduction}

Recently a new direction has been discovered dealing with the extension of the Banach Contraction Principle \cite{banach} to partially ordered metric spaces. Ran and Reurings \cite{RR} successfully carried such attempt while investigating the solution(s) to the matrix equation:
$$X = Q \pm \sum_{i = 1}^{i=m} A^*_i F(X) A_{i},$$
where $X \in H(n)$, the set of $n\times n$ Hermitian matrices, $F: H(n) \rightarrow H(n)$ is a monotone function, i.e., $F(X_1) \leq F(X_2)$ if $X_1 \leq X_2$, which maps the set of all $n\times n$ positive definite matrices $P(n)$ into itself, $A_1, \dots, A_m$ are arbitrary $n\times n$ matrices and $Q\in P(n)$, a result known before to Turinici \cite{turinici}.  Another similar approach was carried in \cite{NRL} with applications to some differential equations. Jachymski \cite{jachymski} as the first to give a more general unified version of these extensions by considering graphs instead of a partial order.  In all these works, the mappings considered are monotone contractions.  The case of monotone nonexpansive mappings was first considered in \cite{bachar-amine}.  Then the race was on to find out whether the classical fixed point theorems for nonexpansive mappings still hold for monotone nonexpansive mappings.  In particular, an analogue to Browder \cite{browder} and G\"{o}hde \cite{gohde} fixed point theorems for monotone mappings does hold \cite{buthinah-amine}.  But it is still unknown whether an analogue to the classical Kirk's fixed point theorem \cite{kirk65} holds for monotone nonexpansive mappings.  The difficulty in doing this resides in the fact that the monotone Lipschitzian mappings enjoy nice properties only on comparable elements.  In fact, they may not be even continuous, a property obviously shared by Lipschitzian mappings.  In this paper, we extend Goebel and Kirk's fixed point theorem \cite{goebel-kirk} for asymptotically nonexpansive mappings to the case of monotone mappings.\\

An interesting reference with many applications of the fixed point theory of monotone mappings is the excellent book by Carl and Heikkil\"{a} \cite{ch}.

\section{Preliminaries}

Let $(M,d)$ be a metric space endowed with a partial order $\preceq$.  We will say that $x, y \in M$ are comparable whenever $x \preceq y$ or $y \preceq x$.  Next we give the definition of monotone mappings.

\begin{definition} Let $(M,d, \preceq)$ be a metric space endowed with a partial order.  Let $T: M \rightarrow M$ be a map. $T$ is said to be monotone or order-preserving if
$$x \preceq y \Longrightarrow T(x) \preceq T(y),$$
for every $x,y \in M$.
\end{definition}

Next we give the definition of monotone Lipschitzian mappings.\\

\begin{definition}\label{def-asymptotic-ne} Let $(M,d, \preceq)$ be a metric space endowed with a partial order.  Let $T: M \rightarrow M$ be a map. $T$ is said to be
monotone Lipschitzian mapping if $T$ is monotone and there exists $k \geq 0$ such that
$$d(T(x),T(y)) \leq k \ d(x,y),$$
for every $x,y \in M$ such that $x$ and $y$ are comparable.  We will say that $T$ is a monotone asymptotic nonexpansive mapping if there exists $\{k_n\}$ a sequence of positive numbers such that $\lim\limits_{n \rightarrow +\infty} \ k_n =1$ and
$$d(T^n(x),T^n(y)) \leq k_n \ d(x,y),$$
for every comparable elements $x,y \in M$. A point $x \in M$ is said to be a fixed point of $T$ whenever $T(x) = x$.  The set of fixed points of $T$ will be denoted by $Fix(T)$.
\end{definition}

Note that monotone Lipschitzian mappings are not necessarily continuous.  They usually have a good topological behavior on comparable elements but not on the entire set on which they are defined.

\medskip
\noindent Before we close this section, recall that a sequence $\{x_n\}_{n \in \mathbb{N}}$ in a partially ordered set $(M, \preceq)$ is said to be
\begin{enumerate}
\item[(i)] monotone increasing if $x_n \preceq x_{n+1}$, for every $n \in \mathbb{N}$;
\item[(ii)] monotone decreasing if $x_{n+1} \preceq x_n$, for every $n \in \mathbb{N}$;
\item[(iii)] a monotone sequence if it is either monotone increasing or decreasing.
\end{enumerate}

\section{Monotone Asymptotic Nonexpansive Mappings}
The fixed point theory for asymptotic nonexpansie mappings finds its root in the work of Goebel and Kirk \cite{goebel-kirk}.  Following some successful results on monotone mappings in recent years, the poriginal fixed point theorem of Goebel and Kirk for these mappings was elusive till now.  The setting will be uniformly convex Banach spaces partially ordered.  \\

\begin{definition} \label{UC_def_gen}
Let $(X,\|.\|)$ be a Banach space. We say that $X$ is uniformly convex (in short, UC) if for every $\varepsilon >0$
$$\delta(\varepsilon) = \inf \Big\{1 - \left\|\frac{x+y}{2}\right\|; \|x\| \leq 1,\ \|y\|\leq 1,\ \|x-y\| \geq \varepsilon \Big\} > 0.$$
the function $\delta$ is known as the modulus of uniform convexity of $X$.
\end{definition}

\medskip
\noindent The following technical lemma will be useful to the proof of our main result.

\begin{lemma}\label{technical-lemma-type}  Let $C$ be a nonempty closed convex subset of uniformly convex Banach space $(X,\|.\|)$.  Let $\tau: C \rightarrow [0,+\infty)$ be a type function, i.e., there exists a bounded sequence $\{x_n\} \in X$ such that
$$\tau(x) = \limsup_{n \rightarrow +\infty} \|x_n - x\|,$$
for every $x \in C$.  Then $\tau$ has a unique minimum point $z \in C$ such that
$$\tau(z) = \inf \{\tau(x);\ x \in C\} = \tau_0.$$
Moreover, if $\{z_n\}$ is a minimizing sequence in $C$, i.e. $\lim\limits_{n \rightarrow +\infty} \tau(z_n) = \tau_0$, then $\{z_n\}$ converges strongly to $z$.
\end{lemma}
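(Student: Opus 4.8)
The plan is to reduce all three assertions to the single statement that \emph{every minimizing sequence for $\tau$ is Cauchy}, from which existence, uniqueness, and strong convergence follow at once. Granting the Cauchy property: such a sequence converges to some $z \in X$, and $z \in C$ since $C$ is closed, while continuity of $\tau$ (noted below) forces $\tau(z) = \tau_0$, so a minimum exists; if $z'$ were a second minimum, the interleaved sequence $z, z', z, z', \dots$ is minimizing, hence Cauchy, forcing $z = z'$; and an arbitrary minimizing sequence, being Cauchy, converges to a minimum point, hence to $z$, which is exactly the last assertion. (Existence alone could instead be obtained from reflexivity of $X$ and weak lower semicontinuity of $\tau$, but the direct route yields uniqueness and strong convergence simultaneously.)

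First I would record the elementary properties of $\tau$. The triangle inequality inside the $\limsup$ gives $|\tau(x) - \tau(y)| \le \|x - y\|$, so $\tau$ is Lipschitz continuous; from $\|x_n - \lambda x - (1-\lambda)y\| \le \lambda\|x_n - x\| + (1-\lambda)\|x_n - y\|$ and passage to the $\limsup$, $\tau$ is convex; and since $\{x_n\}$ is bounded, $\tau(x) \le \|x\| + \sup_n\|x_n\| < \infty$ on $C$, so $\tau_0 := \inf_C \tau$ is a finite number in $[0,+\infty)$ and minimizing sequences exist. The only quantitative input is the standard reformulation of uniform convexity: if $u, v \in X$ with $\|u\|, \|v\| \le R$, $R > 0$, and $\|u - v\| \ge \varepsilon$, then
\[
\left\| \frac{u+v}{2} \right\| \le R\left(1 - \delta\left(\frac{\varepsilon}{R}\right)\right).
\]

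Now let $\{z_n\}$ be a minimizing sequence. The case $\tau_0 = 0$ is immediate: taking $\limsup$ over $m$ in $\|z_n - z_k\| \le \|z_n - x_m\| + \|x_m - z_k\|$ gives $\|z_n - z_k\| \le \tau(z_n) + \tau(z_k) \to 0$. So assume $\tau_0 > 0$, which is the crux. If $\{z_n\}$ were not Cauchy, pick $\varepsilon > 0$ and indices $n_j, k_j \to \infty$ with $\|z_{n_j} - z_{k_j}\| \ge \varepsilon$, and set $w_j = \tfrac12(z_{n_j} + z_{k_j}) \in C$. Fix $\eta \in (0,\tau_0]$. For $j$ large, $\tau(z_{n_j}), \tau(z_{k_j}) < \tau_0 + \eta$, hence there is $m_0(j)$ with $\|x_m - z_{n_j}\|, \|x_m - z_{k_j}\| < \tau_0 + 2\eta$ for $m \ge m_0(j)$; since $\|(x_m - z_{n_j}) - (x_m - z_{k_j})\| = \|z_{n_j} - z_{k_j}\| \ge \varepsilon$, the inequality above with $R = \tau_0 + 2\eta$ gives $\|x_m - w_j\| \le (\tau_0+2\eta)\left(1 - \delta\left(\frac{\varepsilon}{\tau_0+2\eta}\right)\right)$ for $m \ge m_0(j)$. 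Taking $\limsup$ over $m$ and using $\tau(w_j) \ge \tau_0$ yields $\tau_0 \le (\tau_0+2\eta)\left(1 - \delta\left(\frac{\varepsilon}{\tau_0+2\eta}\right)\right)$; since $\eta \le \tau_0$ and $\delta$ is nondecreasing, the right side is at most $(\tau_0+2\eta)\left(1 - \delta\left(\frac{\varepsilon}{3\tau_0}\right)\right)$, and letting $\eta \to 0^+$ gives $\tau_0 \le \tau_0\left(1 - \delta\left(\frac{\varepsilon}{3\tau_0}\right)\right) < \tau_0$, a contradiction. Hence $\{z_n\}$ is Cauchy. I expect the main obstacle to be this uniform-convexity estimate, specifically the bookkeeping of the quantifiers — $j$ chosen large after the fixed $\eta$, then $m \to \infty$ — together with the small-$\eta$ limit; the remaining steps are routine.
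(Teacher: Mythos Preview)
Your proof is correct, but it proceeds by a genuinely different route than the paper's. The paper treats the three conclusions separately: existence is obtained from reflexivity of $X$ (the nested closed bounded convex sublevel sets $C_n = \{x \in C : \tau(x) \le \tau_0 + 1/n\}$ have nonempty intersection), and then uniqueness and convergence of minimizing sequences are each proved by a direct uniform-convexity contradiction, comparing a pair of distinct minimizers, respectively a minimizing sequence against the known minimum point $z$. You instead reduce everything to the single claim that minimizing sequences are Cauchy, from which existence (via completeness and continuity of $\tau$), uniqueness (via interleaving), and strong convergence all drop out at once. Your approach is more self-contained in that it never invokes reflexivity or weak compactness --- only the metric uniform-convexity estimate and completeness --- and it packages the three assertions into one argument. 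The paper's decomposition, on the other hand, isolates the existence step as a purely reflexivity-based fact (which would survive in a reflexive but not uniformly convex space), at the cost of running the uniform-convexity contradiction twice in slightly different forms.
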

\begin{proof}  Note that $\tau$ is continuous and convex.  Let us show the existence of the minimum point of $\tau$.  For every $n \geq 1$, the subset $C_n = \{x \in C;\ \tau(x) \leq \tau_0 + 1/n \}$ is not empty and is a closed convex subset of $C$ which is bounded.  The reflexivity of $X$ implies that $C_\infty = \bigcap\limits_{n \geq 1} C_n \neq \emptyset$.  Clearly we have $C_\infty = \{z \in C;\ \tau(z) = \tau_0\}$.  Let us prove that $C_\infty$ is reduced to one point.  Let $z_1$ and $z_2$ be in $C_\infty$.  Assume that $z_1 \neq z_2$.  In this case, we must have $\tau_0 \neq 0$.  Let $\alpha \in (0, \tau_0)$.  Then there exists $n_0 \geq 1$ such that for every $n \geq n_0$, we have
$$\|x_n- z_1\| \leq \tau_0 + \alpha,\; and\;  \|x_n- z_2\| \leq \tau_0 + \alpha.$$
Set $\displaystyle \varepsilon = \frac{\|z_1-z_2\|}{2\tau_0}$, then
$$\left\|x_n - \frac{z_1+z_2}{2} \right\| = \left\|\frac{(x_n -z_1)+(x_n-z_2)}{2} \right\| \leq (\tau_0 +\alpha) \left(1 - \delta(\varepsilon)\right),$$
for every $n \geq n_0$, which implies
$$\tau\left(\frac{z_1+z_2}{2}\right) \leq (\tau_0 +\alpha) \left(1 - \delta(\varepsilon)\right).$$
Hence $\tau_0 \leq (\tau_0 +\alpha) \left(1 - \delta(\varepsilon)\right)$, for every $\alpha \in (0,\tau_0)$.  If we let $\alpha \rightarrow 0$, we will get $\tau_0 \leq \tau_0 \ (1 - \delta(\varepsilon)) < \tau_0$.  This contradiction implies that $C_\infty$ is reduced to one point, i.e. $\tau$ has a unique minimum point.  Next, let $\{z_n\}$ be a minimizing sequence of $\tau$.  Let us prove that $\{z_n\}$ converges to the minimum point $z$.  This conclusion is obvious if $\tau_0 = 0$.  Assume $\tau_0 > 0$ and $\{z_n\}$ does not converge to $z$.  Since $\{x_n\}$ is bounded, then $\{z_n\}$ is also bounded.  Therefore, there exists $R > 0$ such that
$$\max\Big(\|x_n-z_m\|, \|x_n -z\|\Big) \leq R,$$
for every $n, m \in \mathbb{N}$.  Since $\{z_n\}$ does not converge to $z$, we may assume that
$$\varepsilon = \inf \left\{\frac{\|z_m - z\|}{R}; \ m \in \mathbb{N}\right\} > 0.$$
Using the definition of the modulus of convexity of $X$, we get
$$\left\|x_n - \frac{z_m+z}{2} \right\| = \left\|\frac{(x_n -z_m)+(x_n-z)}{2} \right\| \leq \max\Big(\|x_n-z_m\|, \|x_n -z\|\Big) \left(1 - \delta(\varepsilon)\right),$$
for every $n, m \in \mathbb{N}$.  If we let $n \rightarrow +\infty$, taking the limit-sup, we get
$$\tau\left(\frac{z_m+z}{2} \right) \leq \max\Big(\tau(z_m), \tau(z)\Big) \left(1 - \delta(\varepsilon)\right),$$
for every $m \in \mathbb{N}$, which implies
$$\tau_0 \leq \tau(z_m)\ \left(1 - \delta(\varepsilon)\right).$$
If we let $m \rightarrow +\infty$, we get $\tau_0 \leq \tau_0\ \left(1 - \delta(\varepsilon)\right) < \tau_0$.  This contradiction implies that $\{z_n\}$ does converge to $z$.  The proof of Lemma \ref{technical-lemma-type} is complete.
\end{proof}

\medskip

\noindent Since the main result of this work is set in a partially ordered Banach space, we assume that $(X, \|.\|)$ is endowed with a partial order $\preceq$.  Throughout, we assume that order intervals are convex and closed.  Recall that an order interval is any of the subsets
$$[a,\rightarrow) = \{x \in X; a \preceq x\},\; and \; (\leftarrow,b] = \{x \in X; x \preceq b\},$$
for every $a,b \in X$.

\medskip

\noindent Now we are ready to state the main result of this work.

\begin{theorem}\label{monotone-asymptotic-ne-uc}  Let $(X, \|.\|, \preceq)$ be a partially ordered Banach space for which order intervals are convex and closed.  Assume $(X, \|.\|)$ is uniformly convex.  Let $C$ be a nonempty convex closed bounded subset of $X$ not reduced to one point.  Let $T: C \rightarrow C$ be a continuous monotone asymptotic nonexpansive mapping.  Then $T$ has a fixed point if and only if there exists $x_0 \in C$ such that $x_0$ and $T(x_0)$ are comparable.
\end{theorem}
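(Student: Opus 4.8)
The ``only if'' direction is trivial: if $p\in Fix(T)$ then $p=T(p)$, so $p$ and $T(p)$ are comparable. For the converse, let $x_0\in C$ be such that $x_0$ and $T(x_0)$ are comparable; I treat the case $x_0\preceq T(x_0)$, the case $T(x_0)\preceq x_0$ being entirely symmetric (interchange the roles of $[a,\rightarrow)$ and $(\leftarrow,b]$). Put $x_n=T^n(x_0)$. Monotonicity of $T$ and an easy induction give $x_n\preceq x_{n+1}$ for all $n$, so $\{x_n\}$ is a monotone increasing sequence.

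The key point is to work not with the asymptotic center of $\{x_n\}$ inside $C$ — over which one cannot control comparability with the orbit — but with the set of all upper bounds of the orbit,
$$U=\{u\in C:\ x_n\preceq u\ \text{for all}\ n\}=C\cap\bigcap_{n\geq 0}[x_n,\rightarrow).$$
Since order intervals are closed and convex, $U$ is a closed, convex, bounded subset of $X$. It is nonempty: $X$ is reflexive (being uniformly convex), so some subsequence of the bounded sequence $\{x_n\}$ converges weakly to a point $\bar x\in C$, and for each $m$ the weakly closed set $[x_m,\rightarrow)$ contains all sufficiently late terms of that subsequence, hence $\bar x\in[x_m,\rightarrow)$; thus $\bar x\in U$. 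Finally $T(U)\subseteq U$: if $u\in U$ then $x_n\preceq u$ yields $x_{n+1}=T(x_n)\preceq T(u)$ for every $n$, and since $x_0\preceq x_1$ this gives $x_m\preceq T(u)$ for all $m$, while $T(u)\in C$.

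Now apply Lemma \ref{technical-lemma-type} to the closed convex set $U$ and the type function $\tau(u)=\limsup_n\|x_n-u\|$: there is a unique minimizer $w\in U$ with $\tau_0:=\tau(w)=\inf_U\tau$, and every minimizing sequence of $\tau$ in $U$ converges strongly to $w$. Because $w\in U$, each $x_{n-j}$ (for $n\geq j$) is comparable to $w$, so monotone asymptotic nonexpansiveness gives $\|x_n-T^j w\|=\|T^j x_{n-j}-T^j w\|\leq k_j\|x_{n-j}-w\|$; letting $n\to\infty$ yields $\tau(T^j w)\leq k_j\tau_0$. Since $T^j w\in U$ we also have $\tau(T^j w)\geq\tau_0$, so $\tau(T^j w)\to\tau_0$ as $j\to\infty$ because $k_j\to 1$. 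Hence $\{T^j w\}_{j\geq 1}$ is a minimizing sequence in $U$, so $T^j w\to w$ strongly; continuity of $T$ then gives $T^{j+1}w=T(T^j w)\to T(w)$, while $T^{j+1}w\to w$, whence $T(w)=w$.

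The one place that needs the right idea — and the only genuine obstacle — is the choice of the domain $U$ over which to minimize the type function: a monotone Lipschitzian map is well behaved only between comparable points, so one needs a $T$-invariant closed convex set each of whose points is comparable to every orbit element, and $U$ is precisely such a set. The monotone increasing property of the orbit, the nonemptiness of $U$ via reflexivity, and the closing limit argument (which is where the assumed continuity of $T$ enters) are then routine once Lemma \ref{technical-lemma-type} is in hand.
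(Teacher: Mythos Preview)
Your proof is correct and follows essentially the same route as the paper: your set $U$ is exactly the paper's $C_\infty=\bigcap_{n\geq 0}[T^n(x_0),\rightarrow)\cap C$, and the subsequent use of Lemma \ref{technical-lemma-type} on the type function of the orbit, the inequality $\tau(T^j w)\leq k_j\tau_0$, and the continuity argument to conclude $T(w)=w$ all match the paper's proof step for step. The only cosmetic difference is that you exhibit a point of $U$ via a weak subsequential limit of the orbit, whereas the paper appeals directly to reflexivity and the finite intersection property of the nested closed convex bounded sets.
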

\begin{proof}  Obviously if $x$ is a fixed point of $T$, then $x$ and $T(x) = x$ are comparable.  Let $x_0 \in C$ be such that $x_0$ and $T(x_0)$ are comparable.  Without loss of any generality, assume that $x_0 \preceq T(x_0)$.  Since $T$ is monotone, then we have $T^n(x_0) \preceq T^{n+1}(x_0)$, for every $n \in \mathbb{N}$.  In other words, the orbit $\{T^n(x_0)\}$ is monotone increasing.  Since the order intervals are closed and convex and $X$ is reflexive, we conclude that
$$C_\infty = \bigcap\limits_{n \geq 0} [T^n(x_0), \rightarrow)\cap C = \bigcap\limits_{n \geq 0} \{x \in C;\ T^n(x_0) \preceq x\} \neq \emptyset.$$
Let $x \in C_\infty$, then $T^n(x_0) \preceq x$ and since $T$ is monotone, we get
$$T^n(x_0) \preceq T(T^n(x_0)) = T^{n+1}(x_0) \preceq T(x),$$
for every $n \geq 0$, i.e., $T(C_\infty) \subset C_\infty$.  Consider the type function $\tau: C_\infty \rightarrow [0,+\infty)$ generated by $\{T^n(x_0)\}$, i.e. $\tau(x) = \limsup\limits_{n \rightarrow +\infty} \|T^n(x_0)-x\|$.  The Lemma \ref{technical-lemma-type} implies the existence of a unique $z \in C_\infty$ such that $\tau(z) = \inf \{\tau(x);\ x \in C_\infty\} = \tau_0$.  Since $z \in C_\infty$, we have $T^p(z) \in C_\infty$, for every $p \in \mathbb{N}$, which implies
$$\tau(T^p(z)) = \limsup\limits_{n \rightarrow +\infty} d(T^n(x_0),T^p(z)) \leq k_p\ \limsup\limits_{n \rightarrow +\infty} d(T^n(x_0),z),$$
where $\{k_p\}_{p \in \mathbb{N}}$ is given by Definition \ref{def-asymptotic-ne} such that $\lim\limits_{p \rightarrow +\infty} k_p = 1 $ since $T$ is asymptotically nonexpansive.  Hence $\tau_0 \leq \tau(T^p(z)) \leq k_p \ \tau_0$, for every $p \in \mathbb{N}$.  The main property of $\{k_p\}_{p \in \mathbb{N}}$ implies
$$\lim\limits_{p \rightarrow +\infty}  \tau(T^p(z)) = \tau_0, $$
which means that $\{T^p(z)\}_{p \in \mathbb{N}}$ is a minimizing sequence of $\tau$.  Using Lemma \ref{technical-lemma-type} again, we conclude that  $\{T^p(z)\}_{p \in \mathbb{N}}$ converges to $z$.  Since $T$ is continuous, we have $\lim\limits_{p \rightarrow +\infty} T(T^p(z)) = \lim\limits_{p \rightarrow +\infty} T^{p+1}(z) = T(z) = z$, i.e. $z$ is a fixed point of $T$.
\end{proof}

\medskip
\noindent It is natural to ask whether the continuity assumption in Theorem \ref{monotone-asymptotic-ne-uc} may be relaxed.  This is the main motivation behind \cite{NRL} where the authors relaxed the continuity assumption from the main result of \cite{RR}.  Looking at the proof carefully, we see that the continuity assumption was used at the end to prove that the minimum point is a fixed point.  The difficulty met here has to do with the fact that it is not clear whether the minimum point is comparable to its image under the map in question.  While investigating this point, we came with a property satisfied by any Banach lattice, like the classical $L^p([0,1])$-spaces (for $p\geq 1$), similar to the Opial condition \cite{opial}.  It is well known that the classical $\ell^p$ spaces (for $p \geq 1$) enjoy the Opial condition for the weak topology while $L^p([0,1])$-spaces (for $p > 1$) fail to enjoy such property despite the fact that these spaces are uniformly convexity.

\begin{definition}\label{opial}  Let $(X,\|.\|, \preceq)$ be a partially ordered Banach space.
\begin{enumerate}
\item[(i)] \cite{opial}  $X$ is said to satisfy the weak-Opial condition if whenever any sequence $\{x_n\}$ in $X$ which weakly converges to $x$, we have
$$\limsup_{n \rightarrow +\infty} \|x_n - x\| < \limsup_{n \rightarrow +\infty} \|x_n - y\|,$$
for every $y \in X$ such that $x \neq y$.
\item[(ii)] $X$ is said to satisfy the monotone weak-Opial condition if whenever any monotone increasing (resp. decreasing) sequence $\{x_n\}$ in $X$ which weakly converges to $x$, we have
$$\limsup_{n \rightarrow +\infty} \|x_n - x\| \leq \limsup_{n \rightarrow +\infty} \|x_n - y\|,$$
for every $y \in X$ such that $x \preceq y$ (resp. $y \preceq x$).
\end{enumerate}
\end{definition}

\medskip
\noindent The following result is amazing.

\begin{proposition}\label{lattice-opial}  Any Banach lattice satisfies the monotone weak-Opial condition.
\end{proposition}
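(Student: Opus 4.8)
\section*{Proof proposal}

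The plan is to use only two elementary structural features of a Banach lattice $X$. First, its positive cone $X^{+}=\{x\in X:\ x\succeq 0\}$ is norm-closed (this follows from the norm-continuity of the lattice operations), and it is convex; hence every order interval $[a,\rightarrow)=a+X^{+}$ and $(\leftarrow,b]=b-X^{+}$ is norm-closed and convex, and therefore \emph{weakly} closed by Mazur's theorem. Second, the lattice norm is monotone: $0\preceq u\preceq v$ implies $|u|=u\preceq v=|v|$, hence $\|u\|\le\|v\|$. I would carry out the argument for a monotone increasing sequence, the monotone decreasing case being entirely symmetric (interchange $\preceq$ and $\succeq$, and replace intervals $[x_m,\rightarrow)$ by $(\leftarrow,x_m]$).

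So let $\{x_n\}$ be monotone increasing and weakly convergent to some $x\in X$, and fix $y\in X$ with $x\preceq y$. The first step is to observe that $x$ dominates the whole sequence, i.e. $x_n\preceq x$ for all $n$. Indeed, for a fixed index $m$ and every $n\ge m$ one has $x_n\in[x_m,\rightarrow)$; since this order interval is weakly closed and $\{x_n\}$ converges weakly to $x$, we obtain $x\in[x_m,\rightarrow)$, that is, $x_m\preceq x$. As $m$ is arbitrary, $x_n\preceq x$ for every $n$.

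The second step combines $x_n\preceq x$ with $x\preceq y$. For each $n$ write $y-x_n=(y-x)+(x-x_n)$, a sum of two elements of $X^{+}$; hence $y-x_n\succeq 0$ and moreover $(y-x_n)-(x-x_n)=y-x\succeq 0$, so
$$0\preceq x-x_n\preceq y-x_n.$$
Applying the monotonicity of the lattice norm to $u=x-x_n$ and $v=y-x_n$ gives $\|x_n-x\|\le\|x_n-y\|$ for every $n$, and taking $\limsup$ as $n\to+\infty$ yields exactly
$$\limsup_{n\to+\infty}\|x_n-x\|\le\limsup_{n\to+\infty}\|x_n-y\|,$$
which is the monotone weak-Opial condition.

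I do not anticipate a real obstacle: the only substantive ingredient is the weak closedness of order intervals, which is what forces the weak limit $x$ to be an upper bound of the increasing sequence and thereby makes the one-line lattice-norm estimate available. The point worth flagging in the write-up is that this argument delivers only the non-strict inequality $\le$, which is precisely why the relevant notion here is the (weaker) monotone weak-Opial condition and not the classical Opial condition; upgrading to a strict inequality would require extra geometric data (an Opial-type or uniform-convexity modulus) that a general Banach lattice need not possess, consistent with the $L^{p}([0,1])$ remark preceding the statement.
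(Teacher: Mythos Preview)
Your proof is correct and follows essentially the same route as the paper's: show that the weak limit $x$ is an upper bound of the increasing sequence (using that closed convex order intervals are weakly closed), then apply the monotonicity of the lattice norm to $0\preceq x-x_n\preceq y-x_n$ and pass to the $\limsup$. You spell out the ``$x_n\preceq x$'' step via Mazur's theorem more explicitly than the paper does, but the argument is the same.
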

\begin{proof}  Let $(X, \|.\|, \preceq)$ be a Banach lattice.  One of the properties of $X$ states
$$0 \preceq u \preceq v \;\; implies\;\; \|u\| \leq \|v\|,$$
for every $u,v \in X$, and the positive cone $P$ of $X$ is convex and closed  \cite{LT}.   In order to show that $X$  satisfies the monotone weak-Opial condition, let $\{x_n\}_{n \in \mathbb{N}}$ be a monotone sequence in $X$ which converges weakly to $x$.  Without loss of generality, we assume that $\{x_n\}_{n \in \mathbb{N}}$ is monotone increasing.  Let $y \in X$ be such that $x \preceq y$.  Since order intervals of $X$ are convex and closed, we conclude that $x_n \preceq x \preceq y$ which implies
$0 \preceq x-x_n \preceq y-x_n$, for every $n \in \mathbb{N}$.  Hence  $\|x-x_n\| \leq \|y-x_n\|$, for every $n \in \mathbb{N}$, which implies
$$\limsup_{n \rightarrow +\infty} \|x_n - x\| \leq \limsup_{n \rightarrow +\infty} \|x_n - y\|.$$
\end{proof}

\medskip\medskip
\noindent Using the monotone Opial condition, we can relax the continuity assumption in Theorem \ref{monotone-asymptotic-ne-uc}.

\begin{theorem}\label{monotone-asymptotic-ne-uc-opial}  Let $(X, \|.\|, \preceq)$ be a partially ordered Banach space for which order intervals are convex and closed.  Assume $(X, \|.\|)$ is uniformly convex.  Let $C$ be a nonempty convex closed bounded subset of $X$ not reduced to one point.  Assume that $X$ satisfies the monotone weak-Opial condition.  Let $T: C \rightarrow C$ be a monotone asymptotic nonexpansive mapping.  Then $T$ has a fixed point if and only if there exists $x_0 \in C$ such that $x_0$ and $T(x_0)$ are comparable.
\end{theorem}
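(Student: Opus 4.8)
The plan is to follow the proof of Theorem~\ref{monotone-asymptotic-ne-uc} up to the construction of the minimum point of a type function, and then to replace the use of continuity at the very end by the monotone weak-Opial condition. The ``only if'' direction is trivial, since if $z$ is a fixed point then $z$ and $T(z)=z$ are comparable. For the converse, take $x_0\in C$ with $x_0$ and $T(x_0)$ comparable and assume, without loss of generality, that $x_0\preceq T(x_0)$; then the orbit $\{T^n(x_0)\}$ is monotone increasing and bounded. Since $X$ is uniformly convex it is reflexive, and since order intervals are convex and closed they are weakly closed (Mazur). Hence, if $\xi$ is any weak cluster point of $\{T^n(x_0)\}$, then for each $n$ the tail of the orbit lies in the weakly closed set $[T^n(x_0),\rightarrow)$, so $T^n(x_0)\preceq \xi$; comparing two cluster points inside the weakly closed intervals $(\leftarrow,\cdot]$ then forces them to coincide. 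Thus the bounded orbit $\{T^n(x_0)\}$, lying in a reflexive space, has a unique weak cluster point $z$ and therefore converges weakly to it, with $T^n(x_0)\preceq z$ for all $n$.

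First I would establish the comparability $z\preceq T(z)$: applying the monotone map $T$ to $T^{n-1}(x_0)\preceq z$ gives $T^n(x_0)\preceq T(z)$ for every $n\geq 1$, and passing to the weak limit in the weakly closed set $(\leftarrow,T(z)]$ yields $z\preceq T(z)$; iterating, $z\preceq T^p(z)$ for all $p\geq 0$. Now consider the type function $\tau(x)=\limsup_{n\rightarrow+\infty}\|T^n(x_0)-x\|$. Applying the monotone weak-Opial condition to the monotone increasing sequence $\{T^n(x_0)\}$, which converges weakly to $z$, gives $\tau(z)\leq \tau(y)$ for every $y\in X$ with $z\preceq y$. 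In particular $z$ realizes the infimum of $\tau$ over the nonempty closed convex bounded set $D=[z,\rightarrow)\cap C$, and by the uniqueness statement in Lemma~\ref{technical-lemma-type} it is the unique minimum point of $\tau$ on $D$. Because $z\preceq T(z)$, the map $T$ sends $D$ into itself, so $T^p(z)\in D$ for every $p$.

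Then, exactly as in the proof of Theorem~\ref{monotone-asymptotic-ne-uc} (using $T^{n-p}(x_0)\preceq z$, the asymptotic nonexpansive inequality, and the shift-invariance of $\limsup$), one obtains $\tau(T^p(z))\leq k_p\,\tau(z)$ for every $p$; combined with $\tau(z)\leq \tau(T^p(z))$ and $k_p\rightarrow 1$, this shows $\tau(T^p(z))\rightarrow\tau(z)=\inf_D\tau$, so $\{T^p(z)\}$ is a minimizing sequence of $\tau$ in $D$. Lemma~\ref{technical-lemma-type} then gives $T^p(z)\rightarrow z$ strongly, hence also $T^{p+1}(z)\rightarrow z$ strongly. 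Since $z$ and $T^p(z)$ are comparable, $\|T^{p+1}(z)-T(z)\|=\|T(T^p(z))-T(z)\|\leq k_1\,\|T^p(z)-z\|\rightarrow 0$, so $\{T^{p+1}(z)\}$ converges strongly to $T(z)$ as well; uniqueness of the strong limit yields $T(z)=z$. The case $T(x_0)\preceq x_0$ is treated symmetrically, working with decreasing sequences, the intervals $[\cdot,\rightarrow)$, and the corresponding half of the monotone weak-Opial condition.

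The delicate point, which has no counterpart in the continuous case, is to arrange that the minimum point of the type function is exactly the weak limit of the orbit and is comparable to its own image; this is precisely what allows the monotone weak-Opial condition to replace continuity. Everything rests on order intervals being weakly closed: this is what turns ``$z$ is a weak cluster point'' into ``$T^n(x_0)\preceq z$'' and into ``$z\preceq T(z)$'', and it is also what makes the weak cluster point unique, so that the orbit actually converges weakly rather than merely clustering.
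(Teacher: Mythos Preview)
Your proof is correct and follows essentially the same route as the paper's: weak limit $z$ of the orbit, monotone weak-Opial to identify $z$ as the minimum of the type function on $[z,\rightarrow)\cap C$, then the minimizing-sequence argument from Lemma~\ref{technical-lemma-type} to get $T^p(z)\to z$ strongly. Two small differences are worth recording. First, you establish $z\preceq T(z)$ \emph{before} running the minimizing-sequence argument, whereas the paper defers it; your order is the logically correct one, since one needs $T^p(z)\in D=[z,\rightarrow)\cap C$ to invoke Lemma~\ref{technical-lemma-type} on $D$. Second, the closing step differs: the paper observes that the monotone increasing sequence $\{T^p(z)\}$ converges strongly to $z$, so by closedness of order intervals $T^p(z)\preceq z$, which together with $z\preceq T(z)$ forces $T(z)=z$; you instead use the Lipschitz inequality on comparable elements, $\|T(T^p(z))-T(z)\|\leq k_1\|T^p(z)-z\|\to 0$, and conclude by uniqueness of limits. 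Both finishes are valid; yours makes explicit that the asymptotic-nonexpansive hypothesis already provides enough ``continuity along the orbit of $z$'' to dispense with any global continuity assumption.
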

\begin{proof} As we did in the proof of Theorem \ref{monotone-asymptotic-ne-uc}, we first assumed that $x_0 \preceq T(x_0)$.  Then the orbit $\{T^n(x_0)\}$ is a monotone increasing sequence.  Since $X$ is reflexive, it is easy to show that $\{T^n(x_0)\}$ is weakly convergent to some point $x \in C$ and $T^n(x_0) \preceq x$, for every $n \in \mathbb{N}$.  Since $X$ satisfies the monotone weak-Opial condition, we know that
$$\limsup_{n \rightarrow +\infty} \|T^n(x_0) - x\| \leq \limsup_{n \rightarrow +\infty} \|T^n(x_0) - y\|,$$
for every $y \in \widetilde{C} = C \cap [x, \rightarrow)$.  Note that $\widetilde{C}$ is a nonempty closed convex subset of $C$.  Therefore the type function $\tau$ generated by the orbit $\{T^n(x_0)\}$ has $x$ as its unique minimum point in $\widetilde{C}$.  As we did in the proof of Theorem \ref{monotone-asymptotic-ne-uc}, we see that $\{T^p(x)\}$ converges strongly to $x$.  Let us show that $x \preceq T(x)$.  We have $T^n(x_0) \in (\leftarrow, x]\cap C$, for every $n \in \mathbb{N}$.  Since $T$ is monotone we get $T^{n+1}(x_0) \in (\leftarrow, T(x)]\cap C$, for every $n \in \mathbb{N}$.  Since $(\leftarrow, T(x)]\cap C$ is convex and closed, then the weak-limit of $\{T^{n+1}(x_0)\}$ also belong to this set, i.e. $x \in (\leftarrow, x]\cap C$.  In other words, we have $x \preceq T(x)$.  This will imply that $\{T^n(x)\}$ is a monotone increasing sequence which converges to $x$.  Therefore we must have $T^n(x) \preceq x$, for every $n \in \mathbb{N}$.  This will force $x = T(x)$, i.e. $x$ is a fixed point of $T$.
\end{proof}

\medskip
\noindent  {\bf Acknowledgment}

\medskip
\noindent The authors would like to acknowledge the support provided by the deanship of scientific research at King Fahd University of Petroleum \& Minerals for funding this work through project No. IP142-MATH-111.

\bigskip

\end{document}